\newcommand*{\id}{{\mathrm{id}}}
\newcommand*{\GL}{{\mathrm{GL}}}     
\newcommand*{\U}{{\mathrm{U}}}     
\newcommand*{\R}{{\mathbb R}}                               
\newcommand*{\CC}{{\mathbb{C}}}
\newcommand*{\Gr}{{\mathrm{Gr}}}                            
\newcommand*{\Hb}{{\mathbb H}}                              
\newcommand*{\Pb}{{\mathbb P}}                              
\newcommand*{\Qb}{{\mathbb Q}}
\newcommand*{\Sb}{{\mathbb S}}                              
\newcommand*{\Zb}{{\mathbb Z}}
\newcommand*{\pa}{{\partial}}
\newtheorem{theorem}{Theorem}[section]
\newtheorem{lemma}[theorem]{Lemma}    
\newtheorem{corollary}[theorem]{Corollary} 
\theoremstyle{definition}
\newtheorem{definition}[theorem]{Definition}
\theoremstyle{remark}
\newtheorem{remark}[theorem]{Remark}
\numberwithin{equation}{section}
\begin{document}

\title[Computing the Maslov index]{Computing the Maslov index for large systems} 
\author[Beck]{Margaret Beck}                                                       
\address{Maxwell Institute for Mathematical Sciences                 
and School of Mathematical and Computer Sciences,            
Heriot-Watt University, Edinburgh EH14 4AS, UK and Boston University, Boston
MA~02215, USA}
\email{mabeck@math.bu.edu}   
\thanks{}  
\author[Malham]{Simon J.A. Malham}                  
\address{Maxwell Institute for Mathematical Sciences                 
and School of Mathematical and Computer Sciences,            
Heriot-Watt University, Edinburgh EH14~4AS, UK}                 
\email{S.J.Malham@ma.hw.ac.uk}                   
\thanks{}                  
\date{6th November 2013}
\commby{}

\begin{abstract}          
We address the problem of computing the Maslov index for
large linear symplectic systems on the real line. The Maslov index measures
the signed intersections (with a given reference plane) of a path of
Lagrangian planes. The natural chart parameterization
for the Grassmannian of Lagrangian planes is the space of real 
symmetric matrices. Linear system evolution induces a Riccati evolution in the
chart. For large order systems this is a practical approach
as the computational complexity is quadratic in the order. 
The Riccati solutions, however, also exhibit singularites
(which are traversed by changing charts). 
Our new results involve characterizing these Riccati singularities
and two trace formulae for the Maslov index as follows. 
First, we show that the number of singular eigenvalues of the 
symmetric chart representation equals the dimension of intersection 
with the reference plane. 
Second, the Cayley map is a diffeomorphism from the 
space of real symmetric matrices to the manifold of unitary
symmetric matrices. We show the logarithm of the 
Cayley map equals the arctan map (modulo $2\mathrm{i}$) 
and its trace measures the 
angle of the Langrangian plane to the reference plane. 
Third, the Riccati flow under the Cayley map induces a flow   
in the manifold of unitary symmetric matrices. Using the 
natural unitary action on this manifold, we pullback
the flow to the unitary Lie algebra and monitor its trace. 
This avoids singularities, and is a natural robust procedure. 
We demonstrate the effectiveness of these approaches
by applying them to a large eigenvalue problem.
We also discuss the extension of the Maslov index 
to the infinite dimensional case.
\end{abstract}            

\maketitle

\section{Introduction}
Our goal is to compute the Maslov index for non-autonomous, symplectic,
linear differential operators of the form
\begin{equation*}
\pa-A
\end{equation*}
with domain the real line $\R$. Here $A\colon\R\to\mathfrak{sp}(\R^{2n})$,
the symplectic Lie algebra of real $2n\times 2n$ matrices. We are particularly
interested in the case when $n$ is large. We assume that the matrix
$A$ is constant in the far-field limits of the domain $\R$ 
(the two limits need not be the same). The Maslov index is an integer valued 
topological invariant that measures the following occurances. 
Consider the set of (necessarily) $n$ solutions that decay exponentially fast, say, 
to the left far-field. These solutions constitute a path of frames, indeed a path
in the Stiefel manifold of symplectic frames $\mathrm{V}(\R^{2n})$. Each frame
spans a plane in the manifold of Lagrangian planes $\Lambda(\R^{2n})$ known as the 
Lagrangian Grassmannian. The fundamental group of $\Lambda(\R^{2n})$ 
is $\pi_1\bigl(\Lambda(\R^{2n})\bigr)=\mathbb Z$. 
We evolve the Lagrangian plane from the left far-field towards the right far-field and count the 
number of intersections this Lagrangian plane path has with a given reference plane. 
The dimension of each intersection generates the 
multiplicity contribution to the Maslov index, 
and the direction of passage generates its sign contribution. 
There are various equivalent ways to formulate the definition of the 
Maslov index more precisely, and we refer to \cite{Furutani} for more details. 
This picture can be succinctly summarized in terms 
of cohomological classes of paths; see Arnol'd~\cite{Arnold}.

One of the main motivations for studying operators of the form $\pa-A$
is that they arise in the context of eigenvalue problems associated 
with the stability of solutions of travelling waves. 
For example, suppose one is interested in determining 
the qualitative behaviour of solutions of the reaction-diffusion system
\begin{equation*}
\pa_tU=B\pa_x^2U+F(U), 
\end{equation*}
where $U\colon\mathbb{R}\times\mathbb{R}^+\to\mathbb{R}^n$ and
$B$ is a diagonal matrix of positive diffusion coefficients.
A key step is to determine the existence of any stationary solutions, 
say $U_*(x)$, and their stability properties. The reason is that
stable solutions attract nearby data and therefore provide 
a qualitative description of the types of behaviours that
one would expect to observe as the system evolves. 
In order to determine if $U_*$ is stable, one can linearize the 
reaction-diffusion system about $U_*$ to obtain 
\begin{equation*}
H\coloneqq B\partial_x^2+V(x),
\end{equation*}
where $V(x)\coloneqq \mathrm{d}F(U_*(x))$ is the potential. 
In general, a necessary condition for $U_*$ to be stable is that 
the spectrum of this linear operator, in an
appropriate function space such as $L^2(\mathbb{R})$, 
be contained in the closed left half plane. 
The most difficult part of the spectrum to analyze is 
typically the point spectrum, or eigenvalues, 
which can be determined by studying
\begin{equation*}
(H-\lambda)U=O.
\end{equation*}
We assume $V$ is selfadjoint and set 
$q\coloneqq(\mathrm{Re}\,U,\mathrm{Im}\,U)^{\text{\tiny{T}}}$.
Then if we set $p\coloneqq\pa_xq$
we can rewrite this eigenvalue problem as a first order system
(hereafter $\pa\coloneqq\pa_x$)
\begin{equation*}
(\pa-A)\begin{pmatrix}q\\p\end{pmatrix}=O,
\end{equation*}
where $A=A(x,\lambda)$ is the corresponding symplectic coefficient matrix.
Eigenvalues then correspond to values of $\lambda$ for 
which solutions $(q,p)^{\text{\tiny{T}}}$ exist that decay 
to zero as $x\to\pm\infty$. This asymptotic behavior is 
equivalent to requiring 
that the solution lie in the intersection of two particular 
Lagrangian subspaces, namely the stable and unstable manifolds 
associated with $x=+\infty$ and $x=-\infty$, respectively. 
Thus, being able to determine such intersections, via the 
Maslov index (and a matrix Riccati equation as we will see), 
would provide insight into the stability of  
$U_*$. 
Such a framework is not limited to reaction-diffusion equations. 
It also applies to a variety of other examples, including higher-order 
equations such as the dispersive KdV equation. See Section~\ref{sec:example}
and examples in Chardard \textit{et al.\/ }~\cite{CDB2}.

There is a large body of existing literature on determining the 
spectral properties of the types of linear operators mentioned above. 
Perhaps the most widely used tool is the Evans function (Alexander,
Gardner and Jones~\cite{AGJ}, 
Sandstede~\cite{S:review}, Deng and Nii~\cite{DengNii}, 
Sandstede and Oh~\cite{SO}),
which is constructed for systems governed by operators of
the form $\pa-A$.
The use of the Maslov index in such contexts is relatively recent;
see Jones~\cite{J88}, Chardard \textit{et al.\/ }~\cite{CDB2},
Deng and Jones~\cite{DengJones}, Jones \textit{et al.\/ }~\cite{JLM}.
A limitation of the majority of these results 
is that they apply only to problems in one spatial dimension. 
As a result, there has been recent interest in developing techniques 
that apply to arbitrary spatial dimensions. Notable steps in this 
direction include Deng and Nii~\cite{DengNii} and 
Sandstede and Oh~\cite{SO}, involving the Evans function, 
and Deng and Jones~\cite{DengJones}, involving the Maslov index. However, 
there has not yet been an application of these abstract results to 
concrete examples, where the stability of a specific stationary solution 
is actually computed. One potential consequence of 
this current work is to provide a mechanism to do just 
that. Much of the theoretical framework herein 
in principle applies to arbitrary 
symplectic Hilbert spaces, such as those of the 
multidimensional case considered in Deng and Jones~\cite{DengJones}. 
Indications of this are given in Section~\ref{sec:conclu}.

Intuitively, the Maslov index can provide information about stability 
in the following way. Consider the reaction-diffusion system above
with $U\in\mathbb{R}$. This is just a Sturm-Liouville problem, 
and it can be recast in terms of Pr\"ufer coordinates,
which are polar coordinates for $(q,p)^{\text{\tiny{T}}}$. 
One can then determine the dynamics of the phase and prove 
a so-called oscillation theorem. This says that, 
as the eigenvalue parameter is increased, the amount
of oscillation in the phase of the corresponding solution decreases. 
One thus obtains a result concerning the sequence of eigenvalues 
$\lambda_0 > \lambda_1 > \dots$ and corresponding eigenfunctions 
$U_0, U_1, \dots$, which says that $U_k$ has exactly $k$ zeros. 
This is particularly useful when 
$\pa-A$ arises from a system such as the reaction-diffuson system above 
by linearizing about the stationary solution $U_*$. In this case, 
$\pa_xU_*$ is an eigenfunction associated with the eigenvalue $\lambda=0$. 
If, for example, $U_*$ is a pulse 
then $U_*'$ has exactly one zero. Hence, there is exactly one eigenvalue with 
positive real part and the wave is spectrally unstable. Similarly,
if $U_*$ is a front 
then the linearization has no positive eigenvalues and the wave is spectrally stable. 
The Maslov index can be thought of as a higher-dimensional
analogue of this oscillation index that applies to a more 
general class of equations. The winding, or oscillation, in the 
general case comes from the fact that the fundamental group of 
the space of Lagrangian planes is $\mathbb{Z}$. 
Note that the Maslov index and the Evans function provide 
stability information in a fundamentally different way. 
The latter is an analytic function whose zeros correspond to eigenvalues, 
and thus in principle one can locate the eigenvalues by locating its zeros. The former, 
however, allows one to count, but not locate, the eigenvalues.

Computing the Maslov index associated with $\pa-A$ will in general 
involve numerical integration. Recently Chardard \textit{et al.}~\cite{CDB2}
demonstrated how to numerically compute the Malsov index for solitary waves. 
To circumvent the inherent numerical instabilities associated with computing
the path of symplectic frames where the frame vectors have distinct
exponential growth rates, Chardard \textit{et al.}
lifted the path in the Stiefel manifold of symplectic frames $\mathrm{V}(\R^{2n})$
to a path in the symplectic exterior algebra $\bigwedge^n\mathrm{V}(\R^{2n})$.
This path has a single exponential growth rate that can be scaled out
and can be robustly numerically computed. The number of parameters
required to represent $\bigwedge^n\mathrm{V}(\R^{2n})$ is $2n$ choose $n$,
which grows exponentially with $n$. The Pl\"ucker relations between the 
parameters can be used to reduce the dimension of the system. 
A natural resolution is to apply the ortho-symplectic integration 
proposed by Chardard \textit{et al.} or indeed a symplectic version 
of the continuous orthogonalization method proposed 
by Humpherys and Zumbrun~\cite{HZ}. 

Here we pursue another natural approach which
utilizes the principle fibre bundle structure
$\GL(\R^n)\to \mathrm{V}(\R^{2n})\to \Lambda(\R^{2n})$.
Indeed we project $\mathrm{V}(\R^{2n})$ onto the 
Lagrangian Grassmannian $\Lambda(\R^{2n})$ by equivalencing by 
all general linear real rank $n$ transformations $\GL(\R^n)$.
Following Ledoux \textit{et al.\/ }~\cite{LMT}, the flow 
governed by $\pa-A$ on $\mathrm{V}(\R^{2n})$ induces a Riccati
flow in a given chart and top cell $\Lambda^0(\R^{2n})$ of the 
Lagrangian Grassmannian $\Lambda(\R^{2n})$. The Riccati
flow exhibits singularities but these just indicate the 
chart chosen $\Lambda^0(\R^{2n})$ is no longer the best 
representative chart. However, crucially, when integrating
Riccati equations in the context of eigenvalue problems as
described above, indeed for non-selfadjoint problems,
Veerle Ledoux recognized that the location (in the complex 
spectral parameter plane) of singularities 
in the components of the Riccati solution
converged to eigenvalues in the far-field limit. See 
Ledoux and Malham~\cite{LM} and Aljasser~\cite{Aljasser} for
more details. Our results here clarify and confirm these observations
in the symplectic context. 

Note that the 
Lagrangian Grassmannian chart $\Lambda^0(\R^{2n})$ is 
isomorphic to vector space of real symmetric matrices,
and the Cayley map is a diffeomorphism from the 
space of real symmetric matrices to the manifold of unitary
symmetric nonsingular matrices (see Section~\ref{sec:prelim}). 
From Arnold~\cite{Arnold} we know the logarithm of 
the determinant of the Cayley map measures the angle 
of the Langrangian plane to the reference plane. 
What we prove that is new is:
\begin{enumerate}
\item The number of singular eigenvalues in the Riccati flow on
$\Lambda^0(\R^{2n})$ equals the dimension of intersection with the 
reference plane, which equals the unsigned integer jump in 
the Maslov index; 
\item The logarithm of the Cayley map equals the arctan map
(modulo $2\mathrm{i}$) and its trace computes the angle between
the evolving Langrangian and reference planes. Hence in principle
we could push forward the Riccati flow under the arctan map and monitor 
its trace;
\item The Riccati flow under the Cayley map induces a flow   
in the manifold of unitary symmetric nonsingular matrices. Using the 
natural unitary action on this manifold, we pullback
the flow to the unitary Lie algebra and monitor its trace. 
This avoids singularities, and is a natural robust procedure
for monitoring the Maslov index. 
\end{enumerate}

Our paper is organised as follows. In Section~\ref{sec:prelim}
we fix our notation, define the Lagrangian Grassmannian and 
introduce known results that we will need. We briefly discuss
in Section~\ref{sec:TFM} the well-known method of changing 
coordinates to standardize the reference Lagrangian plane we 
seek intersections with. 
In Section~\ref{sec:Riccati} we derive the Riccati flow
prescribing the evolution by $\pa-A$ of a Lagrangian plane 
and establish the connection between singularities in the flow 
and intersections with the reference plane. We provide the
details of how to pull back the flow to the unitary Lie algebra
in Section~\ref{sec:UnitaryRiccati}. We compute the Maslov index 
for a large (high order) eigenvalue problem in
Section~\ref{sec:example} and demonstrate the computational advantages
of the methods we have derived/propose. Finally in
Section~\ref{sec:conclu} we discuss the extension to the 
infinite dimensional case.

\section{Lagrangian Grassmannian}\label{sec:prelim}
We introduce the objects of our study and
fix our notion as follows. We denote by $\mathrm{Sp}(\R^{n})$, 
$\GL(\CC^n)$,  $\mathrm{O}(\R^n)$ and $\mathrm{U}(\CC^n)$, respectively, 
the usual symplectic, general linear, orthogonal and unitary Lie groups
of rank $n$ matrices over the fields indicated. We use 
$\mathfrak{sp}(\R^n)$ and $\mathfrak{u}(\CC^n)$ to denote the 
corresponding symplectic and unitary Lie algebras. We denote by:
\begin{itemize}
\item $\mathrm{V}(\R^{2n})$, the Stiefel manifold of symplectic $n$-frames in $\R^{2n}$;
\item $\Lambda(\R^{2n})$, the Lagrangian Grassmann manifold of $n$-planes in $\R^{2n}$;
\item $\mathrm{U}_{\mathrm{sym}}(\CC^n)$, the manifold of unitary 
symmetric nonsingular matrices in $\CC^n$.
\end{itemize}
Our subsequent analysis mainly involves these standard Lie groups and homogeneous manifolds.
Some of the important relations between them are summarized in the following diagram.
\begin{equation*}
\xymatrixcolsep{2.7pc}
\xymatrixrowsep{2.7pc}
\xymatrix{
\Lambda^0(\R^{2n})~\ar@{->}[d]_{\mathrm{Cayley}}
&\ar@{->}[l]~\mathrm{Sp}(\R^{2n})\cap\mathrm{O}(\R^{2n})~\ar@{^{(}->}[r] 
&~\mathrm{Sp}(\R^{2n})\\
\U_{\mathrm{sym}}(\CC^n)~ &\ar@{->}[l]~\U(\CC^n)\ar@{->}[u]~\ar@{^{(}->}[r] 
 &~\GL(\CC^n) \ar@{->}[u]
}
\end{equation*}
Here the vertical lift maps are isomorphisms and all the horizontal maps 
are embeddings and projections as indicated. The relations between the four groups 
on the right are given by natural isomorphic liftings of the 
respective complex groups (bottom) to their real 
counterparts (top), combined with the isomorphism 
$\U(\CC^n)\cong\GL(\CC^n)\cap\mathrm{O}(\R^{2n})$. 
The bottom left projection results from the fibering
\begin{equation*}
\mathrm{O}(\R^n)\to\U(\CC^n)\to\U_{\mathrm{sym}}(\CC^n).
\end{equation*}
The leftmost diffeomorphism, the \emph{Cayley transform}, is explained 
as follows---more details can be found in Arnol'd~\cite{Arnold}
and de Gosson~\cite{deGosson}. 
The Lagrangian Grassmannian is equivalently defined via
the fibering
\begin{equation*}
\GL(\R^n)\to\mathrm{V}(\R^{2n})\to\Lambda(\R^{2n}).
\end{equation*}
Suppose a symplectic frame in $\mathrm{V}(\R^{2n})$ is represented by
a $2n\times n$ matrix.
Charts that cover $\Lambda(\R^{2n})$ are represented by the set of $2n\times n$ matrices 
with distinct $n\times n$ submatrices given by the identity. There are
$2n$ choose $n$ charts. For example if $q$ and $p$ are square $n\times n$ matrices
forming a frame in $\mathrm{V}(\R^{2n})$ we can choose a chart on
$\Lambda(\R^{2n})$ by projecting as follows,
\begin{equation*}
\begin{pmatrix} q\\ p \end{pmatrix}\mapsto\begin{pmatrix} \id\\ s \end{pmatrix},
\end{equation*}
where $s\colon q\mapsto p$ is necessarily symmetric due to symplectic
condition on the frame. 
We observe the Lagrangian Grassmannian is thus $\frac12 n(n+1)$ dimensional. 
The set of Lagrangian planes $\Lambda^0(\R^{2n})$ represented in this chart 
are dense in $\Lambda(\R^{2n})$, and we shall denote
$\Lambda^0(\R^{2n})$ as the top \emph{Schubert cell} in $\Lambda(\R^{2n})$.
The Lagrangian Grassmannian $\Lambda(\R^{2n})$ is the disjoint
union of all the Schubert cells (more on this presently).
As we have already indicated, $\Lambda^0(\R^{2n})$ is diffeomorphic
to the linear space of real symmetric matrices $\mathrm{D}(\R^n)$ of order $n$.
The Cayley transform $\mathrm{Cay}\colon\mathrm{D}(\R^n)\to\U_{\mathrm{sym}}(\CC^n)$ 
given by
\begin{equation*}
\mathrm{Cay}\colon s\mapsto \frac{\id-\mathrm{i}\,s}{\id+\mathrm{i}\,s}
\end{equation*}
is a diffeomorphism and plays an important role in our main results. 
From Arnol'd~\cite{Arnold} there is a natural map 
$\mathrm{Det}^2\colon\Lambda(\R^{2n})\to\mathbb{S}^1$
given by 
\begin{equation*}
\mathrm{Det}^2\coloneqq \mathrm{det}\circ\mathrm{Cay}.
\end{equation*}
We deduce the fundamental group 
$\pi_1\bigl(\Lambda(\R^{2n})\bigr)\cong\mathbb Z$. 
\begin{remark}
In coordinates, for $s\in\Lambda^0(\R^{2n})$ the map $\mathrm{Det}^2$
is straightforward via the above decomposition. However it is
well defined for any plane in $\Lambda(\R^{2n})\backslash\Lambda^0(\R^{2n})$,
i.e.\/ in any lower Schubert cell, as we see in detail in Section~\ref{sec:Riccati}.
\end{remark}

\section{Standard reference plane and total frame matrix}\label{sec:TFM}
Assume we have a path in the Stiefel manifold of symplectic frames $V(\R^{2n})$
generated by the solution flow of the non-autonomous linear differential system
on the real line $\R$,
\begin{equation*}
(\pa-A)\begin{pmatrix}q\\p\end{pmatrix}=O.
\end{equation*}
Here $q$ and $p$ are real $n\times n$ matrices
and $A\colon\R\to\mathfrak{sp}(\R^{2n})$ has the block form
\begin{equation*}
A\coloneqq\begin{pmatrix}a&b\\c&d\end{pmatrix},
\end{equation*}
where $a$, $b$, $c$ and $d$ are real $n\times n$ matrices with
$b$ and $c$ symmetric and $a=-d^{\text{\tiny{T}}}$. The path of 
symplectic frames over $\R$ induces a path of Lagrangian planes
(spanning each frame). We wish to determine the number and type of
intersections of the path of Lagrangian planes with a reference
plane. The occurance and dimension of their intersection is measured 
by the drop in rank of the $2n\times2n$ matrix
\begin{equation*}
\begin{pmatrix}q&q_0\\p&p_0\end{pmatrix},
\end{equation*}
where the left $2n\times n$ block represents our evolutionary symplectic
frame in $V(\R^{2n})$ and the right $2n\times n$ block represents the 
reference plane in $V(\R^{2n})$. Since the reference plane is fixed,
it is natural to seek a change of coordinates to carry the reference
plane to the \emph{standard reference plane} with $q_0=O$ and $p_0=\id$. 
A linear rank $2n$ transformation of coordinates that achieves this is
\begin{equation*}
\begin{pmatrix}q'&O\\p'&\id\end{pmatrix}
=\begin{pmatrix}\id&q_0p_0^{-1}\\O&p_0^{-1}\end{pmatrix}
\begin{pmatrix}q&q_0\\p&p_0\end{pmatrix},
\end{equation*}
where $q'\coloneqq q-q_0p_0^{-1}p$ and $p'\coloneqq p_0^{-1}p$.
This assumes that $p_0$ is nonsingular for the reference plane; it is
not difficult to adapt our subsequent arguments if this is not the case.
Note that \emph{rank is conserved} under this linear transformation.
\begin{definition}[Total frame matrix]
This is the $2n\times2n$ matrix given by
\begin{equation*}
\begin{pmatrix}q'&O\\p'&\id\end{pmatrix}.
\end{equation*}
Its loss of rank measures the occurance and dimension of the
intesection of the plane spanned by the frame 
$(q',p')^{\text{\tiny{T}}}\in V(\R^{2n})$ with the standard reference
plane, or equivalently, the plane spanned by 
$(q,p)^{\text{\tiny{T}}}\in V(\R^{2n})$ with 
$(q_0,p_0)^{\text{\tiny{T}}}\in V(\R^{2n})$.
\end{definition}
\begin{remark}
Since $(q,p)$ satisfy the linear system generated by $\pa-A$,
then $(q',p')$ satisfy an analogous transformed linear system.
\end{remark}
\begin{remark}
Note that, as they represent a solution frame in $\mathrm{V}(\R^{2n})$ of
a linear system, $q'$ and $p'$ (or $q$ and $p$) are finite matrices 
whose components can only become singular in either far-field limit.
\end{remark}
Henceforth we will drop `primes' and assume $q'\to q$
and $p'\to p$.

\section{Riccati flow in the chart}\label{sec:Riccati}
Recall the Lagrangian Grassmannian chart we denoted 
as the top cell $\Lambda^0(\R^{2n})$ in Section~\ref{sec:prelim}.
We derive the Riccati flow in $\Lambda^0(\R^{2n})$ generated
by the linear symplectic flow in $\mathrm{V}(\R^{2n})$.
Define $s\in\mathrm{D}(\R^n)$ as the real symmetric map 
$s\colon q\mapsto p$.
Note $q$ and $p$ satisfy the equations $\pa q=(a+bs)\,q$ and
$\pa p=(c+ds)\,q$. Using $\pa(sq)=\pa s\,q+s\pa q$ we know
$\pa s\,q=\pa p-s\,\pa q$. Direct substitution reveals
$\pa s\,q=c\,q+ds\,q-s\,(a\,q+bs\,q)$.
Hence we see that $s$ satisfies the \emph{Riccati} equation
\begin{equation*}
\pa s=c+ds-s(a+bs).
\end{equation*}
Thus $s\in\mathrm{D}(\R^n)$ prescribes the evolution of the
Lagrangian planes which can be represented in the top
cell $\Lambda^0(\R^{2n})$. Recall that $\Lambda^0(\R^{2n})$
is dense in $\Lambda(\R^{2n})$ and the set of Lagrangian planes 
in $\Lambda(\R^{2n})\backslash\Lambda^0(\R^{2n})$ are all
those respresented in the lower dimensional Schubert cells.
Components in the Riccati solution $s$ can become \emph{singular}
in finite time and this is the \emph{signature} that 
the Lagrangian plane is no longer representable in $\Lambda^0(\R^{2n})$
and moves into a lower Schubert cell. In turn this induces
a drop in rank in $q$ and thus also the total frame matrix.

More precisely, let the real eigenvalues of $s$ be $\mu_i$, $i=1,\ldots,n$.
Imagine we evolve our frame 
$(q,p)^{\text{\tiny{T}}}\in V(\R^{2n})$ under 
$\pa-A$ from one far field to the other. For all $x\in\R$
the solution components $q=q(x)$ and $p=p(x)$ are well defined, bounded 
and the frame $\bigl(q(x),p(x)\bigr)^{\text{\tiny{T}}}$ has rank $n$.
Alongside this frame, we evolve $s\colon\R\to\Lambda^0(\R^{2n})$,
i.e.\/ $s=pq^{-1}$, under its natural Riccati evolution. Note
that $s=s(x)$ is well defined on $\mathrm{Rg}\,q(x)$, the range
of the operator $q(x)$. 
We say that $\mu_i=\mu_i(x)$ is a \emph{singular eigenvalue} of
$s=s(x)$ at $x=x_0$ if $\lim_{x\to x_0}|\mu_i(x)|=\infty$.
\begin{theorem}[Equivalent Maslov index contributions]\label{th:sing}
When the Lagrangian plane intersects the standard reference plane,
the following integer valued quantites are equal (we do not concern
ourselves with the direction of crossing here):
\begin{enumerate}
\item Unsigned integer jump value in the Maslov index;
\item Rank loss in the matrix $q$;
\item Rank loss in the total frame matrix;
\item Dimension of intersection of Lagrangian and standard reference planes;
\item Number of singular eigenvalues of $s$.
\end{enumerate}
\end{theorem}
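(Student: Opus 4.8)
The plan is to route every equivalence through a single integer, the rank deficiency $k := n - \mathrm{rank}\,q$ of the upper block $q$ at the crossing point $x_0$, establishing the three linear-algebra identities (2)--(4) cheaply and treating item (5) as the one genuinely analytic statement.

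First I would dispatch the linear-algebra equivalences at the fixed value $x_0$. A vector lies in the intersection of the evolving plane $\mathrm{Rg}\,(q,p)^{\text{\tiny{T}}}$ with the standard reference plane $\mathrm{Rg}\,(O,\id)^{\text{\tiny{T}}}$ precisely when $(q,p)^{\text{\tiny{T}}}v=(O,\id)^{\text{\tiny{T}}}w$ for some $v,w$, i.e. $qv=O$ and $w=pv$. Since $w$ is then determined by $v$, the intersection is parameterised by $\ker q$, so its dimension is exactly $k$; this gives (4) $=$ (2). For (3), column operations using the invertible identity block clear the block $p$ from the first $n$ columns of the total frame matrix, bringing it to block-diagonal form with blocks $q$ and $\id$; hence its rank is $n+\mathrm{rank}\,q$ and its rank loss is again $k$, giving (3) $=$ (2). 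Finally (1) $=$ (4) is the defining property of the Maslov index recalled in the Introduction and in \cite{Furutani}: the multiplicity contribution at a crossing is the dimension of intersection (we ignore the sign here).

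The heart of the argument is (5), and the crucial structural input is that the frame $(q,p)^{\text{\tiny{T}}}$ has full column rank $n$ for every $x$. This forces $\ker q(x_0)\cap\ker p(x_0)=\{0\}$, so $p(x_0)$ restricted to $\ker q(x_0)$ is injective. I would then pass to a generalised eigenvalue picture: rewriting $sw=\mu w$ as $pv=\mu\,qv$ with $v=q^{-1}w$ shows the eigenvalues of $s$ are those of the pencil $(p,q)$, and a singular eigenvalue ($|\mu|\to\infty$) is an ``eigenvalue at infinity'', i.e. a null direction of $q$ on which $p$ does not vanish. To count these I would use a singular value decomposition $q = U\Sigma W^{\text{\tiny{T}}}$ near $x_0$, with exactly $k$ singular values tending to zero. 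Since $s$ is symmetric its singular values equal $|\mu_i|$, and $\sigma(s)=\sigma\bigl((pW)\Sigma^{-1}\bigr)$ by orthogonal invariance. The last $k$ columns of $(pW)\Sigma^{-1}$ are the vectors $p\,w_j$ scaled by the blowing-up factors $1/\sigma_j$; because the span of the corresponding $w_j$ converges to $\ker q(x_0)$, on which $p$ is injective, these columns span a space of dimension $k$ whose generators diverge. This produces exactly $k$ divergent singular values, hence exactly $k$ singular eigenvalues, giving (5) $=$ (2) and closing the chain.

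The step I expect to be the main obstacle is making this counting rigorous, i.e. showing the $k$ blowing-up columns contribute exactly $k$ divergent singular values, neither more nor fewer, uniformly as $x\to x_0$. The danger is a degenerate ``eigenvalue at infinity'' of the pencil whose algebraic order exceeds its geometric multiplicity $k$. Here the symmetry of $s$ is decisive: for $x\ne x_0$ the matrix $s(x)$ is genuinely symmetric, hence diagonalisable with real spectrum, so no Jordan obstruction can arise. I would make the estimate quantitative with a Courant--Fischer min-max argument that separates the bounded action of $pW$ on $(\ker q)^{\perp}$ from its divergent action on $\ker q$, bounding the $k$ largest singular values below by $c/\sigma_n$ for some $c>0$ while the remaining $n-k$ stay bounded.
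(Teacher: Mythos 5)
Your proposal is correct, and items (1)--(4) are handled essentially as in the paper (which likewise defers the first four equivalences to Arnol'd and de Gosson, while your explicit column-reduction of the total frame matrix and the parameterisation of the intersection by $\ker q$ are the standard arguments behind those citations). Where you genuinely diverge is the key equivalence (2) $=$ (5). The paper argues purely by a kernel/range splitting: from $q^{\text{\tiny{T}}}p=p^{\text{\tiny{T}}}q$ and $pq^{\text{\tiny{T}}}=qp^{\text{\tiny{T}}}$ it deduces that $s$ maps $\mathrm{Rg}\,q$ to itself as a symmetric operator, so that $\dim\mathrm{Rg}\,q=n-k$ supplies $n-k$ finite eigenvalues; it then rules out an $(n-k+1)$-th finite eigenvalue by showing a finite eigenvector $v\in\ker q^{\text{\tiny{T}}}$ would force $p^{\text{\tiny{T}}}v=q^{\text{\tiny{T}}}sv=0$, contradicting full rank of the frame, and runs the same restriction argument for the converse. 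Your route instead works with the pencil $(p,q)$ and an SVD $q=U\Sigma W^{\text{\tiny{T}}}$, reducing the count to showing that $pW\Sigma^{-1}$ has exactly $k$ divergent singular values; the Courant--Fischer step you flag does close this (the $k$ largest singular values are bounded below by $\sigma_{\min}\bigl(p|_{\ker q(x_0)}\bigr)/\max_{j>n-k}\sigma_j\to\infty$, and restricting to the first $n-k$ coordinate directions bounds $\sigma_{k+1}$ above), provided you phrase the convergence of the small singular directions in terms of the invariant subspace rather than individual singular vectors, which need not vary continuously. The paper's argument is shorter and works directly at $x_0$ with $s$ as an operator on $\mathrm{Rg}\,q(x_0)$; yours is more quantitative about the limiting process $x\to x_0$, sidesteps any ambiguity in labelling eigenvalue branches by counting ordered singular values (legitimate since $s$ is symmetric so $|\mu_i|$ are the singular values), and would transfer more readily to the non-selfadjoint setting the paper only conjectures about in its remark, where $s$ is not symmetric and the paper's symmetric-restriction argument is unavailable.
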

\begin{proof}
The first four equivalences can be deduced from results in Arnold~\cite{Arnold} 
and de Gosson~\cite{deGosson} .
Note the rank loss count of de Gosson~\cite[Lemma~198, Prop.~263]{deGosson} 
is double ours as we use the total frame matrix rather than a
map into $\mathrm{Sp}(\R^{2n})$. Hence our goal is to establish that the rank loss 
in $q$ equals the number of singular eigenvalues of $s\in\mathrm{D}(\R^n)$.
More precisely, we wish to show that for any $x_0\in\R$, $s(x_0)$ has 
exactly $k$ singular eigenvalues if and only if $\mathrm{Rank}\,q(x_0)=n-k$.
Recall $q$ and $p$ satisfy the properties $q^{\text{\tiny{T}}}p=p^{\text{\tiny{T}}}q$ 
and $pq^{\text{\tiny{T}}}=qp^{\text{\tiny{T}}}$,
and these imply $s=s(x)$ is symmetric, at least on $\mathrm{Rg}\,q$. 
Thus all of the eigenvalues of $s$ are real. Note that 
$\mathrm{Ker}\,p \cap \mathrm{Ker}\,q=\{O\}$ for all $x$, 
because otherwise there would exist a $v\in\R^n$ such that $v\neq O$ and 
\begin{equation*}
\begin{pmatrix}q(x)\\p(x)\end{pmatrix}v=O, 
\end{equation*}
which contradicts the fact that the rank of this matrix must be $n$. 
Next recall that for any square matrix $M$, we have
$\mathbb{R}^n=\mathrm{Ker}\,M\oplus\mathrm{Rg}\,M^{\text{\tiny{T}}}
=\mathrm{Ker}\,M^{\text{\tiny{T}}}\oplus\mathrm{Rg}\,M$. 
These facts imply that $q^{-1}\colon\mathrm{Rg}\,q\to\mathrm{Rg}\,q^{\text{\tiny{T}}}$,
$p\colon\mathrm{Rg}\,q^{\text{\tiny{T}}}\to\mathrm{Rg}\,q$ and thus 
$s\colon\mathrm{Rg}\,q\to\mathrm{Rg}\,q$.

Suppose $\mathrm{dim}\,\mathrm{Rg}\,q=n-k$, which implies that $s$ has at least $n-k$ eigenvalues 
that are not singular, and therefore at most $k$ singular eigenvalues. 
Suppose there is an additional one, which implies that $s|_{\mathrm{Ker}\,q^{\text{\tiny{T}}}}$ has an 
eigenvalue that is not singular: $s(x_0)v(x_0)=\mu(x_0)v(x_0)$, 
$|\mu(x_0)|<\infty$, and $v(x_0)\in\mathrm{Ker}\,q^{\text{\tiny{T}}}$. 
But we then have
\begin{equation*}
0=q^{\text{\tiny{T}}}v=q^{\text{\tiny{T}}}\mu v=q^{\text{\tiny{T}}}sv
=p^{\text{\tiny{T}}}qq^{-1}v=p^{\text{\tiny{T}}}v. 
\end{equation*}
This is a contradiction because $p^{\text{\tiny{T}}}(x_0)v(x_0)\neq O$ 
if $\mathrm{rank}\,(q^{\text{\tiny{T}}}p^{\text{\tiny{T}}})=n$.

Now suppose $s$ has exactly $k$ singular eigenvalues, 
and thus $n-k$ nonsingular eigenvalues. This implies
$\mathrm{dim}\,\mathrm{Rg}\,q\leqslant n-k$, and so 
$\mathrm{dim}\,\mathrm{Ker}\,q=\mathrm{dim}\,\mathrm{Ker}\,q^{\text{\tiny{T}}}\geqslant k$. 
If it was strictly greater than $k$, we could again consider the restriction 
$s|_{\mathrm{Ker}\,q^{\text{\tiny{T}}}}$ and obtain a contradiction as above.
\end{proof}
\begin{remark}[The train]
The quantities listed in Theorem~\ref{th:sing}
are equivalent to the dimension of intersection with the \emph{train};
see Arnold~\cite{Arnold}, Jones~\cite{J88} or Bose and Jones~\cite{BoseJones}.
They are also equivalent to the drop in dimension from the top to lower 
Schubert cell. Here, the dimension of a Schubert cell is the number of of real
variables required to parameterize it.
\end{remark}
\begin{remark}
We assume the points $x_0\in\R$ where the eigenvalues of $s$ are singular
are isolated. In our applications to spectral problems we will always 
restrict ourselves to regions of the spectral parameter plane where we
know this is true. Moreover, such a degeneracy could be removed via homotopy, 
and as the Maslov index is invariant under homotopy, this need not affect the count.
\end{remark}
Recall the Cayley diffeomorphism
$\mathrm{Cay}\colon\mathrm{D}(\R^n)\to\U_{\mathrm{sym}}(\CC^n)$ 
from Section~\ref{sec:prelim} and that the determinant map
composed with this is $\det\,\mathrm{Cay}\colon\mathrm{D}(\R^n)\to\Sb^1$.
Note that $\Sb^1\cong\U(\CC^1)$. In coordinates we can thus write
\begin{equation*}
\mathrm{e}^{\mathrm{i}\theta}=\mathrm{det}\,\mathrm{Cay}\,s
\end{equation*}
where $\theta\in[0,2\pi)$ parameterizes $\mathfrak{u}(\CC^1)$ and
measures the angle between the evolving Lagrangian and 
standard reference planes. First, since
$\mathrm{det}\,\exp=\exp\,\mathrm{tr}$ we find
\begin{equation*}
\mathrm{det}\,\mathrm{Cay}=\exp\,\mathrm{tr}\,\log\,\mathrm{Cay}.
\end{equation*}
Second, the function $\mathrm{Cay}(s)$
has an expansion in the algebra 
of operator power series with $\CC$-valued coefficients.
Hence by the uniqueness of power series we find
\begin{equation*}
\log\mathrm{Cay}\,s=
\log\,(\id-\mathrm{i}\,s)-\log\,(\id+\mathrm{i}\,s)
=2\mathrm{i}\,\mathrm{arctan}\,s.
\end{equation*}
Putting these two results together we have just proved the following.
\begin{lemma}[Trace formula]\label{lemma:trace}
The following maps $\mathrm{D}(\R^n)\to\U(\CC^1)$ are equivalent:
\begin{equation*}
\log\,\det\,\mathrm{Cay}=2\mathrm{i}\,\mathrm{tr}\,\mathrm{arctan}.
\end{equation*}
The angle between the evolving Lagrangian plane and the standard
reference plane is thus equivalently given in coordinates by
$\theta=-2\,\mathrm{tr}\,\mathrm{arctan}\,s$.
\end{lemma}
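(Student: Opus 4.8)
The plan is to prove the two displayed identities that the lemma asserts, namely
\begin{equation*}
\log\,\det\,\mathrm{Cay}=2\mathrm{i}\,\mathrm{tr}\,\mathrm{arctan}
\end{equation*}
as maps $\mathrm{D}(\R^n)\to\U(\CC^1)$, and the consequent coordinate formula $\theta=-2\,\mathrm{tr}\,\mathrm{arctan}\,s$. I would begin by reducing $\det$ of a matrix to the trace of its logarithm: for any invertible $M$ with a well-defined logarithm one has $\det M=\exp(\mathrm{tr}\,\log M)$, equivalently $\log\det M=\mathrm{tr}\,\log M$. Applied to $M=\mathrm{Cay}\,s$ this turns the left-hand side into $\mathrm{tr}\,\log\,\mathrm{Cay}\,s$, so the entire lemma collapses to the single operator identity $\log\,\mathrm{Cay}\,s=2\mathrm{i}\,\mathrm{arctan}\,s$; the trace and the angle statement then follow immediately by applying $\mathrm{tr}$ and reading off $\mathrm{e}^{\mathrm{i}\theta}=\det\,\mathrm{Cay}\,s$.

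Next I would establish that operator identity. Since $\mathrm{Cay}\,s=(\id-\mathrm{i}\,s)(\id+\mathrm{i}\,s)^{-1}$ and $s$ is symmetric with real eigenvalues (established in the proof of Theorem~\ref{th:sing}), the two factors commute, so $\log\,\mathrm{Cay}\,s=\log(\id-\mathrm{i}\,s)-\log(\id+\mathrm{i}\,s)$, provided the relevant branches behave additively. The cleanest way to handle the branch and convergence issues is to argue spectrally: $s$ being real symmetric, diagonalize $s=Q\,\mathrm{diag}(\mu_1,\dots,\mu_n)\,Q^{\text{\tiny{T}}}$ with $Q$ orthogonal, so every function of $s$ in sight (the Cayley map, the logarithm, $\arctan$) acts eigenvalue-wise. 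The identity then reduces to the scalar fact that for real $\mu$,
\begin{equation*}
\log\frac{1-\mathrm{i}\mu}{1+\mathrm{i}\mu}=2\mathrm{i}\,\arctan\mu,
\end{equation*}
which is the standard $\arctan$-via-complex-logarithm formula and explains the ``modulo $2\mathrm{i}$'' caveat: the logarithm is multivalued, and the branch chosen is the one sending $\mu\mapsto 2\mathrm{i}\,\arctan\mu$ with principal $\arctan$. Summing over eigenvalues and taking traces gives the result directly, bypassing any power-series convergence worries when $\|s\|\geqslant 1$.

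The power-series route sketched in the excerpt (matching $\log\,\mathrm{Cay}\,s$ against $2\mathrm{i}\,\arctan\,s$ by uniqueness of coefficients) is valid only for $\|s\|<1$, where both series converge; I would present it as the formal motivation and then invoke the spectral argument above, or analytic continuation in $s$, to extend the equality to all of $\mathrm{D}(\R^n)$ away from the singular set where some $\mu_i$ passes through a point making $\id\pm\mathrm{i}\,s$ fail the branch conditions. The main obstacle, accordingly, is not computation but branch bookkeeping: ensuring that the additive splitting of the logarithm and the choice of $\arctan$ branch are globally consistent, so that the ``$\mathrm{mod}\,2\mathrm{i}$'' ambiguity is exactly the $2\pi\mathbb{Z}$ ambiguity in $\theta$ and does not corrupt the trace formula. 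Finally, to land the angle statement I would note $\mathrm{e}^{\mathrm{i}\theta}=\det\,\mathrm{Cay}\,s=\exp(2\mathrm{i}\,\mathrm{tr}\,\arctan\,s)$, whence $\theta\equiv 2\,\mathrm{tr}\,\arctan\,s\pmod{2\pi}$; the sign convention $\theta=-2\,\mathrm{tr}\,\arctan\,s$ then records the orientation used for measuring the angle to the reference plane, which I would fix by comparing with the $s=O$ base point where $\mathrm{Cay}\,s=\id$ and $\theta=0$.
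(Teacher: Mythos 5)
Your proof follows the same skeleton as the paper's: first reduce $\log\det$ to $\mathrm{tr}\log$ via $\det\exp=\exp\mathrm{tr}$, then establish the operator identity $\log\mathrm{Cay}\,s=\pm2\mathrm{i}\,\arctan s$ and take traces. Where you genuinely differ is in how that operator identity is justified. The paper argues by uniqueness of coefficients in the algebra of operator power series, which is only literally valid where the series for $\log(\id\pm\mathrm{i}s)$ and $\arctan s$ converge, i.e.\ for $\|s\|<1$; your spectral argument---diagonalize the real symmetric $s$ orthogonally so that every function in sight acts eigenvalue-wise, and reduce to a scalar logarithm identity for real $\mu$---extends the identity to all of $\mathrm{D}(\R^n)$ and makes the branch choice explicit. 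That is a real improvement in rigour over the paper's formal argument, at the modest cost of invoking the spectral theorem (which the paper has already made available: symmetry of $s$ is established in the proof of Theorem~\ref{th:sing}).

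One piece of bookkeeping deserves attention, and it afflicts the paper's displayed identity as well as your scalar reduction. For real $\mu$ one has $1\mp\mathrm{i}\mu=\sqrt{1+\mu^2}\,\mathrm{e}^{\mp\mathrm{i}\arctan\mu}$, so the principal-branch identity is $\log\bigl((1-\mathrm{i}\mu)/(1+\mathrm{i}\mu)\bigr)=-2\mathrm{i}\arctan\mu$, not $+2\mathrm{i}\arctan\mu$ (the standard formula $\arctan\mu=\tfrac{1}{2\mathrm{i}}\log\bigl((1+\mathrm{i}\mu)/(1-\mathrm{i}\mu)\bigr)$ has the factors the other way around). With the correct sign, $\log\det\mathrm{Cay}\,s=-2\mathrm{i}\,\mathrm{tr}\,\arctan s$ and the coordinate formula $\theta=-2\,\mathrm{tr}\,\arctan s$ then drops out of $\mathrm{e}^{\mathrm{i}\theta}=\det\mathrm{Cay}\,s$ with no further convention to fix. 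Your proposed remedy---pinning the sign by comparison at the base point $s=O$---cannot do the job, since both candidate signs give $\theta=0$ there; as written, your argument derives $\theta\equiv+2\,\mathrm{tr}\,\arctan s$ and then asserts the opposite sign. Correcting the scalar identity removes the inconsistency and, as you rightly emphasize, leaves only the harmless $2\pi\Zb$ ambiguity in $\theta$.
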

\begin{remark}
The trace formula implies
$\mathrm{tr}\,\mathrm{arctan}\,s=\sum_{i=1}^n\mathrm{arctan}\,\mu_i$.
We can also derive a nonlinear equation prescribing the flow of
$\mathrm{arctan}\,s$ directly.
\end{remark}
\begin{remark}[Non-selfadjoint operators]
More generally we could consider non-selfadjoint 
operators $\pa-A$ and thus coefficient matrices
$A\in\mathfrak{gl}(\CC^{n})$, the general linear Lie algebra over $\CC$.
In this case the natural manifolds of interest are 
$\mathrm{V}_{k}(\CC^n)$, the Stiefel manifold of complex $k$-frames in $\CC^n$,
and $\Gr_k(\CC^n)$, the Grassmann manifold of $k$-planes in $\CC^n$.
There is a natural fibering
$\GL(\CC^k)\to\mathrm{V}_k(\CC^{n})\to\Gr_k(\CC^n)$.
The Grassmannian $\Gr_k(\CC^n)$ has a Schubert cell structure.
The top cell is parameterized by complex $(n-k)\times k$ matrices.
We can proceed as for the symplectic case. 
Suppose $(q,p)^{\text{\tiny{T}}}$ 
represents a frame in the Stiefel manifold $\mathrm{V}_{k}(\CC^n)$,
where $q\in\CC^{k\times k}$ and $p\in\CC^{(n-k)\times k}$. 
Then $s\colon q\mapsto p$ parameterizes the top cell and satisfies
the Riccati equation $\pa s=c+ds-s(a+bs)$,
where $a\in\CC^{k\times k}$, $b\in\CC^{k\times(n-k)}$, $c\in\CC^{(n-k)\times k}$
and $d\in\CC^{(n-k)\times(n-k)}$
are the blocks of $A$ juxtaposed as for the symplectic case.
Ledoux \textit{et al.\/ }~\cite{LMT} advocated integrating
the Riccati equation for $s$ to evaluate the Evans function
\begin{equation*}
\det\begin{pmatrix}q&q_0\\p&p_0\end{pmatrix},
\end{equation*}
associated with non-selfadjoint eigenvalue problems, where
$q_0$ and $p_0$ represent the far field data. 
We can perform the same rank preserving linear transformation as we did
in Section~\ref{sec:TFM} to transform the Evans matrix 
into a form analogous to the total frame matrix; again
we reassign $q$ and $p$ as the new transformed solution variables.
Then the rank loss in $q$, rank loss in the total frame
Evans matrix and dimension of the intersection of unstable 
and reference planes, are all equivalent. We conjecture they
are also equivalent to the number of singular eigenvalues of 
the matrix $s$.
\end{remark}
\begin{remark}[Superpotentials] If 
$(q,p)^{\text{\tiny{T}}}\in\mathrm{V}_{k}(\CC^n)$,
but with $q\in\CC^{(n-k)\times k}$ and $p\in\CC^{k\times k}$, then
the map $r\colon p\mapsto q$ satisfies the Riccati equation 
$\pa r=b+ar-r(d+cr)$.
The solutions $s$ and $r$ to their respective Riccati equations 
represent superpotentials, see Bougie \textit{et al.\/ }~\cite{BGMR}.
Formally they diagonalize $\pa-A$ in the sense that 
\begin{equation*}
\begin{pmatrix}\id & r \\ s & \id\end{pmatrix}^{-1}(\pa-A)
\begin{pmatrix}\id & r \\ s & \id\end{pmatrix}
=\begin{pmatrix} \pa-(a+bs) & O \\ O&\pa-(d+cr)
\end{pmatrix}.
\end{equation*}
Having diagonalized $\pa-A$ into the block operators 
$\pa-(a+bs)$ and $\pa-(d+cr)$, we can iterate this procedure, 
separately diagonalizing $\pa-(a+bu)$ and $\pa-(d+cv)$,
and so forth. In principle, provided we can solve the Riccati equations
at each level, we can complete a full scalar-block diagonalization of $\pa-A$.
We can also Schur-triangularize. If we replace $r$ by $s^\dag$, the transformation
operator above becomes unitary and Schur-triangularizes $\pa-A$, which
is more akin to the usual factorization procedure in superpotential theory.
We can iterate this procedure in an analogous manner to that described
for diagonalization.
\end{remark}

\section{Riccati flow on the symmetric unitary manifold}\label{sec:UnitaryRiccati}
How does the Riccati flow on $\Lambda^0(\R^{2n})$ transform under
the Cayley diffeomorphism?
The answer is a related Riccati flow as follows. Setting
\begin{equation*}
u\coloneqq\frac{\id-\mathrm{i}\,s}{\id+\mathrm{i}\,s}
\end{equation*}
and computing the derivative $\pa u$ in terms of 
$\pa s$ generates the following.
\begin{lemma}\label{lem:UnitaryRiccati}
If $s\colon\R\to\mathrm{D}(\R^n)$ solves the 
Riccati equation $\pa s=c+ds-s(a+bs)$, then the 
corresponding flow $u\colon\R\to\U_{\mathrm{sym}}(\CC^n)$ is given by
\begin{equation*}
\pa u=C+Du-u(D^\ast+C^\ast u),
\end{equation*}
where $C^\ast$ and $D^\ast$ denote the complex conjugates of 
the $n\times n$ matrices 
\begin{equation*}
C\coloneqq\tfrac12\bigl(a-d-\mathrm{i}(b+c)\bigr)
\qquad\text{and}\qquad
D\coloneqq\tfrac12\bigl(a+d+\mathrm{i}(b-c)\bigr).
\end{equation*}
\end{lemma}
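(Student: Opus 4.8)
The plan is to differentiate the defining relation $u=(\id-\mathrm{i}\,s)(\id+\mathrm{i}\,s)^{-1}$ directly, substitute the Riccati equation for $\pa s$, and then re-express the result entirely in terms of $u$. The first observation I would record is that $\id-\mathrm{i}\,s$, $\id+\mathrm{i}\,s$ and the inverse $(\id+\mathrm{i}\,s)^{-1}$ are all rational functions of the single symmetric matrix $s$ and therefore commute with one another and with $s$; by contrast the blocks $a,b,c,d$ do \emph{not} commute with $s$, so their left/right placement must be preserved throughout. This bookkeeping distinction is what makes the calculation delicate rather than routine.

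Differentiating and using the inverse rule $\pa(\id+\mathrm{i}\,s)^{-1}=-\mathrm{i}\,(\id+\mathrm{i}\,s)^{-1}(\pa s)(\id+\mathrm{i}\,s)^{-1}$, I would collect the two resulting terms and use the identity $\id+u=2(\id+\mathrm{i}\,s)^{-1}$ to arrive at the compact form
\begin{equation*}
\pa u=-2\mathrm{i}\,(\id+\mathrm{i}\,s)^{-1}(\pa s)(\id+\mathrm{i}\,s)^{-1}.
\end{equation*}
The engine of the rest of the proof is the \emph{dictionary} obtained by solving the defining relation for $s$, namely
\begin{equation*}
(\id+\mathrm{i}\,s)^{-1}=\tfrac12(\id+u)\qquad\text{and}\qquad s(\id+\mathrm{i}\,s)^{-1}=-\tfrac{\mathrm{i}}{2}(\id-u),
\end{equation*}
which together let me eliminate $s$ in favour of $u$.

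Next I would substitute $\pa s=c+ds-sa-sbs$ into the compact form and expand the four resulting terms with the dictionary. Because $s$ commutes with $(\id+\mathrm{i}\,s)^{-1}$, each factor of $s$ can be absorbed into an adjacent resolvent to produce a factor $(\id-u)$, while each bare resolvent produces $(\id+u)$; for instance the quadratic piece $(\id+\mathrm{i}\,s)^{-1}(sbs)(\id+\mathrm{i}\,s)^{-1}$ becomes a scalar multiple of $(\id-u)\,b\,(\id-u)$. After this substitution $\pa u$ is a sum of four expressions of the form $(\id\pm u)\,X\,(\id\pm u)$ with $X\in\{a,b,c,d\}$ and explicit scalar prefactors.

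The final step is to expand each $(\id\pm u)\,X\,(\id\pm u)$ and sort the monomials by the position of $u$ into four classes: the constant term, the terms $Xu$ with $u$ on the right, the terms $uX$ with $u$ on the left, and the terms $uXu$. I expect the constant class to assemble into $\tfrac12(a-d)-\tfrac{\mathrm{i}}{2}(b+c)=C$, the right class into $\tfrac12(a+d)+\tfrac{\mathrm{i}}{2}(b-c)=D$, the left class into $-D^\ast$, and the two-sided class into $-C^\ast$, giving $\pa u=C+Du-uD^\ast-uC^\ast u$ as claimed. Here I would use that $a,b,c,d$ are real, so that $C^\ast$ and $D^\ast$ are literally the complex conjugates of $C$ and $D$ obtained by flipping the sign of $\mathrm{i}$. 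The main obstacle is purely organisational: verifying that the many cross terms recombine into exactly these four matrices requires careful attention to the ordering of the noncommuting blocks and to the sign bookkeeping of the powers of $\mathrm{i}$, and it is the reality of the blocks that closes the identification of the left and two-sided classes with $-D^\ast$ and $-C^\ast$.
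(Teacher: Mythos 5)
Your proposal is correct and follows exactly the route the paper intends (the paper offers no written proof beyond ``computing the derivative $\pa u$ in terms of $\pa s$''): differentiate the Cayley transform, use $\id+u=2(\id+\mathrm{i}s)^{-1}$ and $s(\id+\mathrm{i}s)^{-1}=-\tfrac{\mathrm{i}}{2}(\id-u)$ to eliminate $s$, and collect monomials by the position of $u$. I verified the scalar prefactors and the four collected classes do assemble into $C$, $Du$, $-uD^\ast$ and $-uC^\ast u$ as you claim, so the argument is complete.
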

There is a natural transitive Lie group action 
$L\colon\U(\CC^n)\times\U_{\mathrm{sym}}(\CC^n)\to\U_{\mathrm{sym}}(\CC^n)$,
which for all unitary symmetric matrices $u$ and unitary matrices $g$ is
given by $L\colon (g,u)\mapsto gug^{\text{\tiny{T}}}$.
Note this is not an isospectral action. 
The corresponding Lie algebra action
$\ell\colon\mathfrak{u}(\CC^n)\times\U_{\mathrm{sym}}(\CC^n)\to\U_{\mathrm{sym}}(\CC^n)$
is given by
\begin{equation*}
\ell\colon(\xi,u)\mapsto\xi u-u\xi^\ast,
\end{equation*}
for any Lie algebra element $\xi\in\mathfrak{u}(\CC^n)$. Recall
for $\xi\in\mathfrak{u}(\CC^n)$ 
the complex conjugate transpose $\xi^\dag=-\xi$.
We can now establish the following.
\begin{theorem}[Pullback to the unitary Lie algebra]\label{th:unitaryflow}
The Riccati flow on $\U_{\mathrm{sym}}(\CC^n)$ has
the form $\pa u=\xi u-u\xi^\ast$
where $\xi\colon\R\times\U_{\mathrm{sym}}(\CC^n)\to\mathfrak{u}(\CC^n)$ 
is given by
\begin{equation*}
\xi\coloneqq D-\tfrac12(uC^\ast-Cu^\dag).
\end{equation*}
We can pullback the flow $u\colon\R\to\U_{\mathrm{sym}}(\CC^n)$ to
the flow $g\colon\R\to\U(\CC^n)$ on the unitary group given by $\pa g=\xi\,g$.
Hence we can pullback the flow $g\colon\R\to\U(\CC^n)$ to the 
flow $\sigma\colon\R\to\mathfrak{u}(\CC^n)$ 
on the unitary Lie algebra given by
\begin{equation*}
\pa\sigma=\mathrm{dexp}_\sigma^{-1}\circ\xi.
\end{equation*}
Here $\mathrm{dexp}_\sigma^{-1}\colon\mathfrak{u}(\CC^n)\to\mathfrak{u}(\CC^n)$
is the operator 
$\mathrm{dexp}_\sigma^{-1}\coloneqq\mathrm{ad}_\sigma/(\exp\mathrm{ad}_\sigma-\id)$,
where $\mathrm{ad}\colon\mathfrak{u}(\CC^n)\times\mathfrak{u}(\CC^n)\to\mathfrak{u}(\CC^n)$
is the Lie algebra commutator operator given in coordinates by
$\mathrm{ad}_\sigma\circ\zeta\coloneqq[\sigma,\zeta]$ for any $\sigma,\zeta\in\mathfrak{u}(\CC^n)$.
\end{theorem}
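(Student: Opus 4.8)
The plan is to treat the three assertions in turn, reducing each to a short computation that rests on the algebraic structure already assembled. Throughout I would rely on the two defining identities of the manifolds involved: for $u\in\U_{\mathrm{sym}}(\CC^n)$, unitarity and symmetry together give $u^\ast=u^\dag=u^{-1}$, while for $\xi\in\mathfrak{u}(\CC^n)$ skew-Hermiticity $\xi^\dag=-\xi$ gives $\xi^{\text{\tiny{T}}}=-\xi^\ast$. First I would establish that the flow of Lemma~\ref{lem:UnitaryRiccati} is the Lie-algebra action $\pa u=\xi u-u\xi^\ast$. The approach is direct substitution of $\xi=D-\tfrac12(uC^\ast-Cu^\dag)$: expanding $\xi u$ and $u\xi^\ast$ and repeatedly collapsing $u^\dag u$ and $uu^\ast=uu^\dag$ to $\id$ returns exactly $C+Du-uD^\ast-uC^\ast u$, the right-hand side of the lemma. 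To close this step I must also check $\xi\in\mathfrak{u}(\CC^n)$, and this is where the symplectic block structure of $A$ enters: from $a=-d^{\text{\tiny{T}}}$ with $b,c$ symmetric one verifies that $D$ is skew-Hermitian and that $C$ is symmetric, and symmetry of $C$ then makes $\tfrac12(uC^\ast-Cu^\dag)$ skew-Hermitian, so $\xi$ lies in the unitary Lie algebra as required.

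Second, for the pullback to $\U(\CC^n)$ I would define $g$ as the solution of $\pa g=\xi g$ with $g(x_0)=\id$, where $\xi=\xi(x,u)$ is evaluated along the flow. Because $\xi$ is skew-Hermitian, the tangency relation $g^\dag(\xi^\dag+\xi)g=O$ shows this flow remains in $\U(\CC^n)$. Setting $u=gu_0g^{\text{\tiny{T}}}$ with $u_0=u(x_0)$ and differentiating gives
\begin{equation*}
\pa u=(\pa g)u_0g^{\text{\tiny{T}}}+gu_0(\pa g)^{\text{\tiny{T}}}=\xi u+u\xi^{\text{\tiny{T}}},
\end{equation*}
and since $\xi^{\text{\tiny{T}}}=-\xi^\ast$ this is precisely $\pa u=\xi u-u\xi^\ast$. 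Thus $g$ is a genuine lift of $u$ under the action $L\colon(g,u)\mapsto gug^{\text{\tiny{T}}}$, and since $L$ maps into $\U_{\mathrm{sym}}(\CC^n)$ the construction is consistent.

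Third, for the pullback to $\mathfrak{u}(\CC^n)$ I would write $g=\exp\sigma$ and invoke the standard formula for the derivative of the matrix exponential, $\pa(\exp\sigma)=(\mathrm{dexp}_\sigma\,\pa\sigma)\exp\sigma$ with $\mathrm{dexp}_\sigma=(\exp\mathrm{ad}_\sigma-\id)/\mathrm{ad}_\sigma$. Matching against $\pa g=\xi g$ gives $\mathrm{dexp}_\sigma\,\pa\sigma=\xi$, and inverting yields $\pa\sigma=\mathrm{dexp}_\sigma^{-1}\circ\xi$ with $\mathrm{dexp}_\sigma^{-1}=\mathrm{ad}_\sigma/(\exp\mathrm{ad}_\sigma-\id)$, exactly as stated.

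I expect the main obstacle to lie in this last step, namely the invertibility of $\mathrm{dexp}_\sigma$ and the fact that $\exp$ is only a local diffeomorphism on $\U(\CC^n)$. The operator $\mathrm{dexp}_\sigma$ fails to be invertible precisely when $\mathrm{ad}_\sigma$ has an eigenvalue in $2\pi\mathrm{i}(\Zb\setminus\{0\})$, i.e.\ when two eigenvalues of $\sigma$ differ by a nonzero integer multiple of $2\pi\mathrm{i}$, so the Lie-algebra flow $\pa\sigma=\mathrm{dexp}_\sigma^{-1}\circ\xi$ is only locally well-posed; one must reinitialize $g=\exp\sigma$ from the identity before this spectral gap condition is violated. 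The substitution in the first step, although routine, is likewise delicate in that it uses both defining properties of $\U_{\mathrm{sym}}(\CC^n)$ simultaneously with the symplectic constraints on $A$, so I would carry it out carefully rather than treat it as automatic.
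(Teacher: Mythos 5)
Your proof is correct and follows essentially the same route as the paper, whose own proof simply asserts that the action form of the flow "can be checked by inspection" and that the two pullbacks are "standard" (citing Munthe--Kaas); you have filled in exactly those details, including the verification that $D$ is skew-Hermitian, $C$ is symmetric, and hence $\xi\in\mathfrak{u}(\CC^n)$. Your closing remark on the local invertibility of $\mathrm{dexp}_\sigma$ and the need to reinitialize $g=\exp\sigma$ is a sensible caveat the paper leaves implicit, but it does not change the argument.
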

\begin{proof}
That the unitary symmetric flow on $\U_{\mathrm{sym}}(\CC^n)$ has
the form $\pa u=\xi u-u\xi^\ast$ can be checked by inspection.
This establishes the vector field governing the flow is a unitary 
Lie algebra action. The pullback of the flow via the Lie group action 
to the unitary Lie group $\U(\CC^n)$ to the flow $\pa g=\xi\,g$ is then
standard. The pullback of the unitary flow via the exponential map 
to the unitary Lie algebra $\mathfrak u(\CC^n)$ to the flow stated
is also standard; see Munthe--Kaas~\cite{HMK} or Malham and Wiese~\cite{MW}.
\end{proof}
\begin{corollary}[Trace of the Lie algebra flow]\label{cor:trace}
The angle $\theta\in\mathfrak{u}(\CC^1)$ 
between the evolving Lagrangian plane and the standard
reference plane is given by
\begin{equation*}
\theta=-2\mathrm{i}\,\mathrm{tr}\sigma+\theta_0,
\end{equation*}
where $\theta_0$ is its initial value and 
$\mathrm{tr}\colon\mathfrak{u}(\CC^n)\to\mathfrak{u}(\CC^1)$ 
is the trace map.
\end{corollary}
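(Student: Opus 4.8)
The plan is to chain together three identifications: the angle $\theta$ is read off from $\det u$ by Lemma~\ref{lemma:trace}, the unitary symmetric matrix $u$ is expressed through the group flow $g$ via the transitive action $L$, and $g$ is expressed through the Lie-algebra element $\sigma$ via $g=\exp\sigma$. First I would recall that $u=\mathrm{Cay}\,s$, so that Lemma~\ref{lemma:trace} (together with $\mathrm{e}^{\mathrm{i}\theta}=\det\mathrm{Cay}\,s$) gives $\mathrm{e}^{\mathrm{i}\theta}=\det u$, and hence $\mathrm{i}\theta=\log\det u$ for a suitable branch. The task then reduces to computing $\det u$ in terms of $\mathrm{tr}\,\sigma$.

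For this I would use the structure established in Theorem~\ref{th:unitaryflow}: the flow $u\colon\R\to\U_{\mathrm{sym}}(\CC^n)$ is recovered from the group flow $g$ (with $\pa g=\xi\,g$) through the action $L\colon(g,u)\mapsto gug^{\text{\tiny{T}}}$. Writing $u(x)=g(x)\,u_0\,g(x)^{\text{\tiny{T}}}$, where $u_0=u(x_0)$ is the initial value and $g(x_0)=\id$, multiplicativity of the determinant gives $\det u=(\det g)^2\,\det u_0$. Since $g=\exp\sigma$ and $\det\exp=\exp\mathrm{tr}$, I obtain $\det g=\exp\mathrm{tr}\,\sigma$, whence $\det u=\exp(2\,\mathrm{tr}\,\sigma)\,\det u_0$. (As a consistency check on the relation $u=g\,u_0\,g^{\text{\tiny{T}}}$, differentiating and using $\xi^{\text{\tiny{T}}}=-\xi^{\ast}$ for $\xi\in\mathfrak{u}(\CC^n)$ recovers precisely $\pa u=\xi u-u\xi^\ast$.)

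Taking logarithms and writing $\mathrm{i}\theta_0=\log\det u_0$ then yields $\mathrm{i}\theta=2\,\mathrm{tr}\,\sigma+\mathrm{i}\theta_0$, and solving for $\theta$ gives $\theta=-2\mathrm{i}\,\mathrm{tr}\,\sigma+\theta_0$, as claimed. The only point requiring care is consistency of the logarithm branch, so that no ambiguous additive multiple of $2\pi$ intrudes. I expect this to be the main (modest) obstacle, and I would resolve it by continuity: since $\sigma\in\mathfrak{u}(\CC^n)$ is skew-Hermitian, its eigenvalues are purely imaginary and $\mathrm{tr}\,\sigma\in\mathrm{i}\,\R$, so $-2\mathrm{i}\,\mathrm{tr}\,\sigma$ is real---consistent with $\theta$ being a genuine angle in the one-parameter description of $\mathfrak{u}(\CC^1)$. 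Both sides of the identity agree at $x_0$ (where $g=\id$, $\sigma=O$, and $\theta=\theta_0$) and vary continuously along the flow, so the branch is fixed unambiguously by analytic continuation from the initial condition.
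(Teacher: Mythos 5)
Your argument is correct and follows essentially the same route as the paper: write $u=g\,u_0\,g^{\text{\tiny{T}}}$, use $\det u=(\det g)^2\det u_0$ with $\det g=\exp\mathrm{tr}\,\sigma$, and take logarithms with $\theta_0=-\mathrm{i}\log\det u_0$. Your added remarks on the branch of the logarithm and the skew-Hermitian nature of $\sigma$ are sensible refinements but do not change the substance of the proof.
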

\begin{proof}
Using the unitary Lie group action on $\U_{\mathrm{sym}}(\CC^n)$ 
we observe that
\begin{equation*}
\mathrm{e}^{\mathrm{i}\theta}=\det\, u
=\det\,gu_0g^{\text{\tiny{T}}}
=(\det\,g)^2\det\,u_0
=\exp(2\,\mathrm{tr}\,\sigma)\det\,u_0.
\end{equation*}
Taking the logarithm and setting 
$\theta_0=-\mathrm{i}\,\log\det\,u_0$ gives the result.
\end{proof}
%
%
\begin{remark}
To compute the evolution of the angle between the evolving Lagrangian
plane and the standard reference plane we must solve the differential
equation for $\sigma$ in Theorem~\ref{th:unitaryflow} and monitor its
trace. Typically this procedure will be numerical. Fortunately, however,
numerical procedures for solving for $\sigma$ (or $g$ or
$u$) are robust in the sense that truncations of the vector field
for $\sigma$, required for implementing Runge--Kutta methods for example,
still lie in the unitary Lie algebra. Indeed this is the idea underlying
Lie group numerical methods such as Runge--Kutta Munthe--Kaas methods 
developed by Munthe--Kaas~\cite{HMK}. 
\end{remark}

\section{Example}\label{sec:example}
We apply our methods to the KdV equation from Chardard
\textit{et al.\/ }~\cite{CDB2} given by
\begin{equation*}
\pa_tU-c\pa_xU+U\pa_xU+\pa_x^3U-\pa_x^5U+\sigma\pa_x^7U=0,
\end{equation*}
where $c=71000/2159^2$ and $\sigma=0.2159$ are constants. 
This has a solitary-wave solution given by 
$U_\ast(x)=a(\mathrm{sech}^6kx+\mathrm{sech}^4kx)$ where
$a=1039500/2159^2$ and $k=(25/2159)^{1/2}$. The above equation 
is Hamiltonian, and the stability of the solitary wave can 
be determined by computing the spectrum of the Hessian of the Hamiltonian, 
i.e.\/ we consider the self-adjoint spectral problem 
$(c-U_*-\pa_x^2+\pa_x^4-\sigma\pa_x^6)U=\lambda U$. 
Setting 
$(q,p)^{\text{\tiny{T}}}\coloneqq(U,\pa_x^2U-\sigma\pa_x^4U,\pa_x^2U,
\pa_xU-\pa_x^3U+\sigma\pa_x^5U,-\pa_xU,\sigma\pa_x^3U)^{\text{\tiny{T}}}$
we find that our goal is to compute the values of the spectral
parameter $\lambda\in\R$ for which the kernel of the operator $\pa-A$
is non-empty---we call these spectral eigenvalues---where
\begin{equation*}
A(x,\lambda)=\begin{pmatrix}
0&0&0&0&-1&0\\
0&0&0&-1&-1&0\\
0&0&0&0&0&1/\sigma\\
-\lambda+c-U_\ast(x)&0&0&0&0&0\\
0&0&-1&0&0&0\\
0&-1&1&0&0&0
\end{pmatrix}.
\end{equation*}
See Chardard \textit{et al.\/ }~\cite{CDB2} for details.
For fixed values of $\lambda\in\R$ we evolve the unstable
manifold, here the Lagrangian plane parameterised by 
$s=s(x,\lambda)\in\mathrm{D}(\R^3)$, from the $x\to-\infty$ 
far field forward to $x\to+\infty$. We used the method of
Schiff and Shnider~\cite{SchiffShnider} to integrate the 
Riccati equation through singularites; also see 
Ledoux \textit{et al.\/ }~\cite{LMT}. We record the eigenvalues
$\mu_i=\mu_i(x,\lambda)$, $i=1,2,3$. Figure~\ref{fig:singeval1top}
(left panel) shows the eigenvalue of $s=s(x,\lambda)$ that becomes singular---the
other two eigenvalues of $s$ remain finite for all $(x,\lambda)\in\R^2$.
We see three curves in the $(x,\lambda)$ plane where the eigenvalue
of $s$ becomes singular as we expect. Further for large $x$ the singular
eigenvalue curves asymptotically approach fixed values---indicating 
an intersection of the evolved Lagrangian plane with the far field 
Lagrangian plane and signifying a spectral eigenvalue. 
\begin{figure}
  \begin{center}
  \includegraphics[width=5cm,height=5cm]{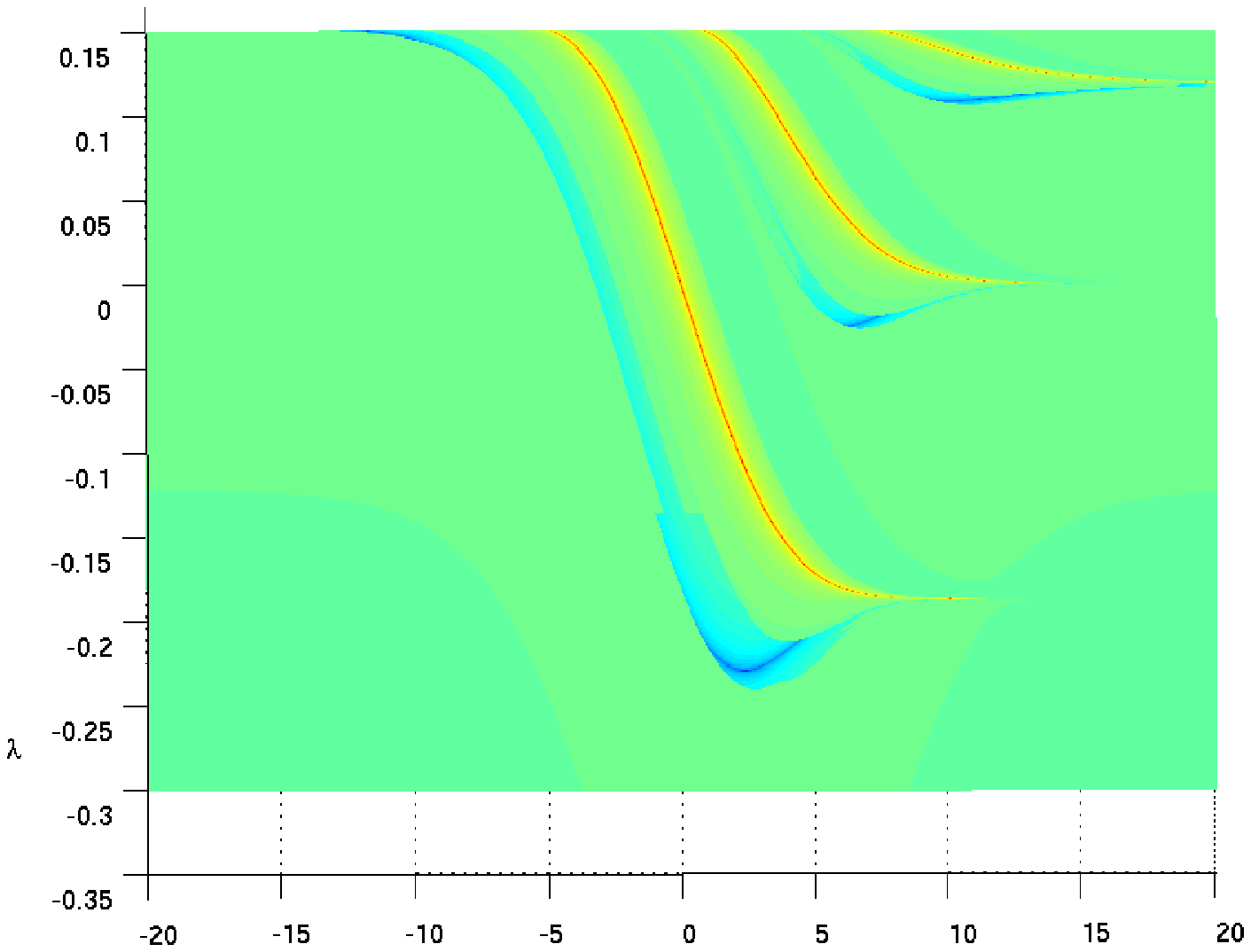}
  \includegraphics[width=6cm,height=5cm]{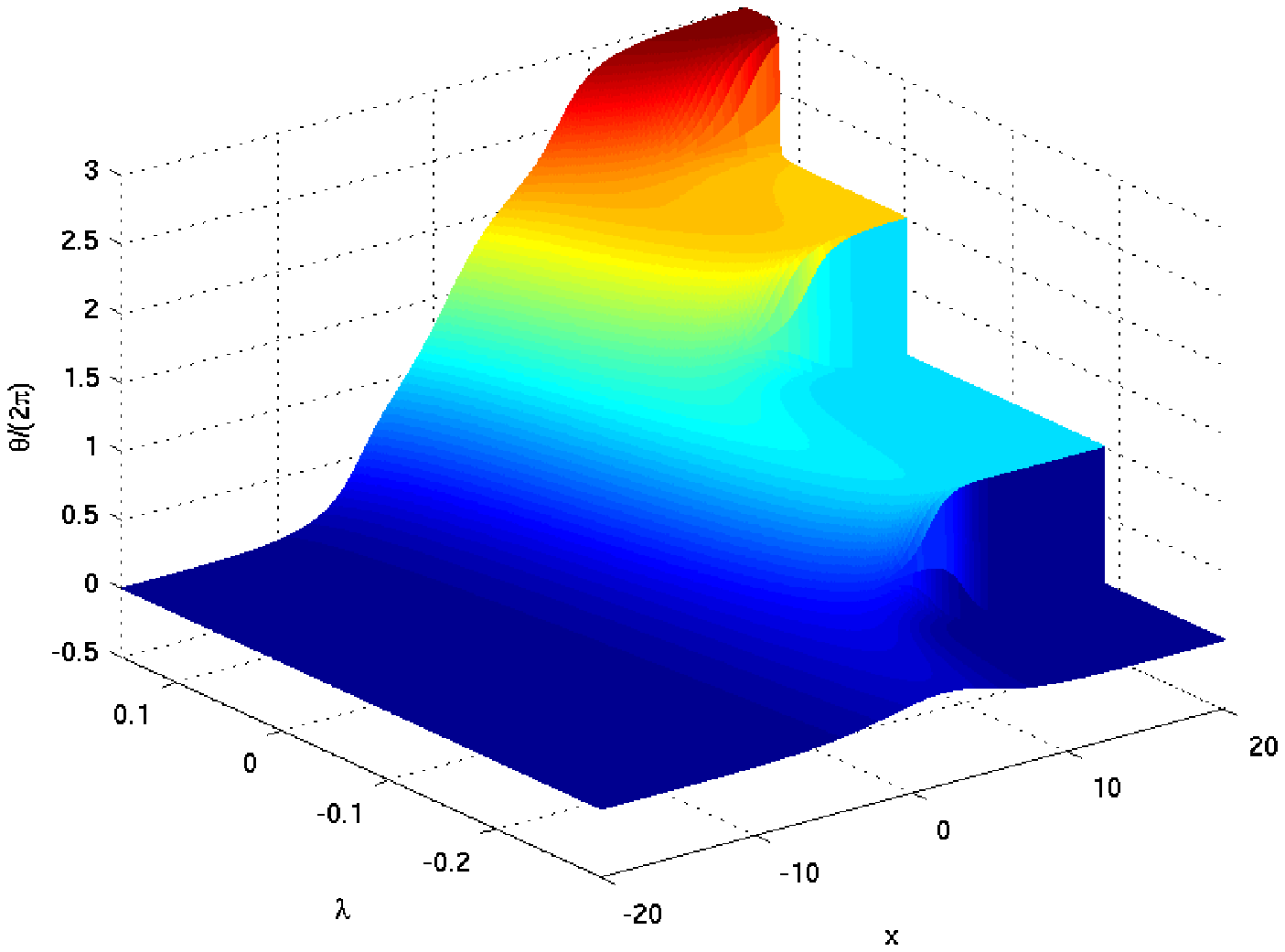}
  \end{center}
  \caption{In the left panel, we plot the singular eigenvalue 
of $s=s(x,\lambda)\in\mathrm{D}(\R^3)$. 
The other two eigenvalues of $s$ are finite. Indeed for fixed values of 
$\lambda\in[-0.3,0.15]$ as shown, we integrate the Riccati equation 
for $s$ from $x=-20$ through to
$x=20$ and record the behaviour of the eigenvalues of $s$. Red curves
indicate the eigenvalue becoming infinite while the blue curves indicate
the eigenvalue becoming zero. As expected, as $\lambda$ increases and passes 
through each spectral eigenvalue, the number of singularites in an eigenvalue of
$s$ in the interval $[-20,20]$ jumps by an integer value.
In the right panel, we plot the trace 
$\theta=\theta(x,\lambda)=-2\mathrm{i}\,\mathrm{tr}\sigma(x,\lambda)\in\mathfrak{u}(\CC^1)$ 
from Corollary~\ref{cor:trace} for $\lambda\in[-0.3,0.15]$ and 
$x\in[-20,20]$. As expected, as $\lambda$ increases and passes 
through each spectral eigenvalue, the value of $\theta(20,\lambda)$ jumps 
by an integer.}
\label{fig:singeval1top}
\end{figure}
For fixed values of $\lambda\in\R$ we also evolved the Lagrangian plane 
parameterized by $\sigma=\sigma(x,\lambda)\in\mathfrak{u}(\CC^3)$, i.e.\/
by the flow on the unitary Lie algebra given in Theorem~\ref{th:unitaryflow}.
This is the pullback of the flow on the manifold of unitary symmetric
matrices via the Lie algebra action $\ell$. 
Explicitly this is
\begin{equation*}
\pa_x\sigma=\mathrm{dexp}_\sigma^{-1}\circ
\xi\bigl(x,\exp\sigma\,u(x_0)\,\exp\sigma^{\text{\tiny{T}}}\bigr),
\end{equation*}
with data $\sigma(x_0)=O$ and where $u(x_0)$ is 
the symmetric unitary data. We chose a simple Euler method
to numerically integrate this nonlinear differential equation.
This means for a fixed $\lambda$, for the data $u_0=u_0(\lambda)$ we set
\begin{equation*}
s_0(\lambda)=p(-\infty,\lambda)q^{-1}(-\infty,\lambda)
\qquad\text{and}\qquad
u_0(\lambda)=\bigl(\id_3-\mathrm{i}s_0(\lambda)\bigr)
\bigl(\id_3+\mathrm{i}s_0(\lambda)\bigr)^{-1}.
\end{equation*}
Euler integration for a sufficiently large number of timesteps $M$ and
corresponding stepsize $h$ is given for $m=0,1,2,\ldots,M-1$ by
\begin{align*}
\sigma_{m+1}&=h\,\bigl(D(x_m,\lambda)
-\tfrac12(u_mC^\ast(x_m,\lambda)-C(x_m,\lambda)u_m^\dag)\bigr),\\
u_{m+1}&=\exp(\sigma_{m+1})\cdot u_m\cdot \exp(\sigma_{m+1}^{\text{\tiny{T}}}).
\end{align*}
The matrices $C=C(x,\lambda)$ and $D=D(x,\lambda)$ 
are explicitly defined in Lemma~\ref{lem:UnitaryRiccati}.
The real and imaginary parts of the components of $\sigma(x,0.15)$
for $x\in[-20,20]$ are shown in Figure~\ref{fig:UnitaryRiccati}. 
As we expect, the solution components are well behaved and integration
procedure very robust. In Figure~\ref{fig:singeval1top} (right panel) 
we plot the trace 
$\theta=\theta(x,\lambda)=-2\mathrm{i}\,\mathrm{tr}\sigma(x,\lambda)\in\mathfrak{u}(\CC^1)$ 
from Corollary~\ref{cor:trace} for $\lambda\in[-0.3,0.15]$ and 
$x\in[-20,20]$. As $\lambda$ increases and passes through each spectral eigenvalue, 
the value of $\theta$ jumps by an integer, contributing to the Maslov index.
\begin{figure}
  \begin{center}
  \includegraphics[width=12cm,height=4cm]{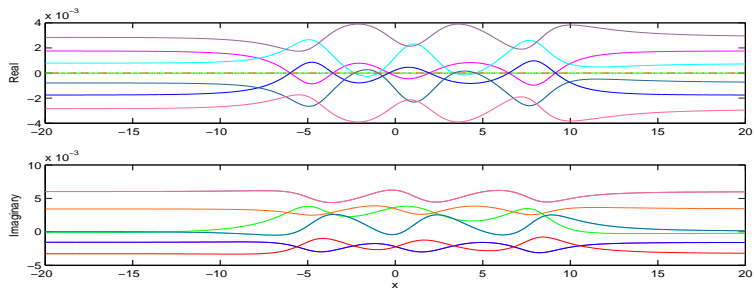}
  \end{center}
  \caption{The panels show the real (top) and imaginary (bottom) parts of
the components of the flow on the unitary Lie algebra, i.e.\/ for 
$\sigma=\sigma(x,\lambda)\in\mathfrak{u}(\CC^3)$ for $\lambda=0.15$ 
and $x\in[-20,20]$; see Theorem~\ref{th:unitaryflow}. Since $\sigma$
is anti-Hermitian there are six non-zero real parts and six distinct
imaginary parts (three are repeated).}
\label{fig:UnitaryRiccati}
\end{figure}
%

\section{Concluding remarks}\label{sec:conclu}
What about the infinite dimensional case? 
Consider a formal example to give the context and 
help clarify the picture. Recall the reaction-diffusion 
parabolic partial differential equation in the introduction. 
However now suppose it admits travelling-wave solutions on 
$\mathbb R\times\mathbb T$.       
Here we suppose the waves travel in the direction along        
$\R$ and we have transverse periodic boundary conditions indicated
by the $1$-dimensional torus $\mathbb T$ (our subsequent formal analysis
also applies to higher dimensional tori).                      
The linear stability of such waves is determined by the structure of the 
spectrum of an unbounded Fredholm operator of the form   
\begin{equation*}                                                        
H\coloneqq B(\pa_x^2+\Delta_y)+d\partial_x+V                             
\end{equation*}                                                          
for $x\in\R$ and $y\in\mathbb T$. The $n\times n$ matrix $B$           
is unchanged from our original example, the matrix $d=d(x)$ 
represents convection and the $n\times n$ potential matrix $V=V(x,y)$.                             
We assume $V(x,y)\to V_\pm(y)$, as $x\to\pm\infty$.                      
Of interest is the eigenvalue problem $Hq=\lambda q$. 
We re-write the eigenvalue problem as follows. We                        
represent the solution $q$ using a transverse Fourier--Galerkin          
basis $\{\varphi_k(y)\colon k\in\Zb\}$ where                             
$\Delta_y\varphi_k=-|k|^2\varphi_k$. Our eigenvalue                      
problem can thus be viewed as an eigenvalue                              
problem in $L^2\bigl(\R;L^2(\mathbb T;\CC^{n})\bigr)$.                   
Since $L^2(\mathbb T;\CC^{n})\cong (\ell^2)^{n}$,   
where $\ell^2$ denotes the usual Hilbert sequence space,                 
we identify each element $q=q(x)$                                        
in $L^2(\mathbb T;\CC^{n})$ with a sequence                              
$\hat q\coloneqq\{\hat q_k\colon k\in\Zb\}$ in $(\ell^2)^{\otimes n}$,   
i.e.\/ for each $x\in\R$ we replace $q(x)$ by $\hat q(x)$.  
Then in this example our separable Hilbert space                         
is $\Hb=L^2(\mathbb T;\CC^{2n})$                                         
and our phase space $L^2(\R;\Hb)$. Our eigenvalue                        
problem above now has the form $\pa^2\hat q=d\pa\hat q+c\hat q$                                   
where we replace $-B^{-1}d$ by $d$. However here we define               
\begin{equation*}                                                        
c\colon\hat q\mapsto B^{-1}(\lambda\hat q+K^2\hat q-V\star\hat q)          
\end{equation*}                                                          
where $K^2$ is the linear diagonal operator on $(\ell^2)^{\otimes n}$              
that sends each component $\hat q_k$ of $\hat q$ to $|k|^2\hat q_k$      
and $V\star\hat q$ is the usual discrete convolution product in          
$(\ell^2)^{\otimes n}$. If we now set $\hat p\coloneqq \pa\hat q$ then           
our eigenvalue problem in $L^2(\R;\Hb)$ is                               
\begin{equation*}                                                        
(\partial-A)\begin{pmatrix} \hat q \\ \hat p\end{pmatrix}=O,
\end{equation*}                                                                               
where $A=A(x;\lambda)$ has the usual block form with                                          
$a=O$, $b=\id$ and $c$ and $d$ are as defined above.                                          
Formally we can still define a compact map 
$s\colon\hat q\to\hat p$ and $s$ satifies 
the Riccati equation $\pa s=c+ds-s(a+bs)$.
Let $s_0$ denote the map 
$s_0\colon\hat q(-\infty)\to\hat p(-\infty)$. 
Then $s_0$ satisfies 
$c_0+ds_0-s_0(a+bs_0)=O$, where $c_0=c(-\infty)$.
Setting $s=s_0+\hat s$ and $c=c_0+\hat c$, then $\hat s$
satisfies the Riccati equation
\begin{equation*}                                                        
\pa\hat s=\hat c+(d-s_0b)\hat s-\hat s(a+bs_0+b\hat s).
\end{equation*}
Note the unbounded operator $K^2$ present in $c$ is not present 
in this Riccati equation.

Suppose $\Hb$ is a separable Hilbert space with a $\mathbb Z_2$-grading
$\Hb=\Qb\oplus\Pb$. 
Let $\GL(\Hb)$ denote the group of bounded invertible operators on $\Hb$.               
We define $\GL_{\mathrm{res}}(\Hb)$ to be the subgroup of $\GL(\Hb)$            
whose elements `preserve' the $\Zb_2$-grading of $\Hb$; see
Pressley and Segal~\cite{PS}. Indeed for each element 
of $\GL_{\mathrm{res}}(\Hb)$, the diagonal maps
$\Qb\to\Qb$ and $\Pb\to\Pb$ are Fredholm while the 
off-diagonal maps $s\colon\Qb\to\Pb$ and $r\colon\Pb\to\Qb$ 
are Hilbert--Schmidt. Indeed the maps $s$ parameterize a chart
of the Fredholm Grassmannian. In the symplectic case ($V$ is selfadjoint
and $d=0$ above), the Hilbert space $\Hb$ is symplectic and
subspaces $\Qb\subset\Hb$ Lagrangian. The Lagrangian Grassmannian 
$\Lambda(\Hb)$ is the space of Lagrangian subspaces. 
Its fundamental group $\pi_1\bigl(\Lambda(\Hb)\bigr)$ is trivial. 
However following Furutani~\cite{Furutani} the subset of $\Lambda(\Hb)$ 
parameterized by $s$, the Fredholm Lagrangian Grassmannian, has 
fundamental group $\mathbb{Z}$. Further, the determinant of the
Cayley map of $s$ is well defined via Fredholm (or modified) determinants.
This is the direction we intend to pursue next.


\subsection*{Acknowledgements}
We thank Graham Cox, Chris Jones, Yuri Latushkin 
Veerle Ledoux and Robbie Marangell for helpful discussions. 
MB would like to thank the Institute for Mathematics 
and its Applications at the University of Minnesota
where part of this work was conducted. 
The work of MB was partially supported by NSF DMS 1007450 
and a Sloan Fellowship.

\bibliographystyle{amsplain}

\begin{thebibliography}{3}

\bibitem{AGJ} J.C. Alexander, R. Gardner and  C.K.R.T. Jones,
\emph{A topological invariant arising in the stability analysis of
traveling waves}, J. Reine Angew. Math. 410 (1990),               
pp.~167--212.                                                     

\bibitem{Aljasser} A.Y. Aljasser, \emph{Computing the Evans
function for the stability of combustion waves}, PhD thesis, 2012. 

\bibitem{Arnold} V.I. Arnol'd, \emph{Characteristic classes entering
in quantization conditions}, translated from Funktsional'nyi Analiz i Ego
Prilozheniya, Vol. 1, No. 1, pp.~1--14, 1967.

\bibitem{Arnold2} V.I. Arnol'd, \emph{The Sturm theorems and symplectic
geometry}, Translated from Funktsional'nyi Analiz i Ego Prilozheniya,
Vol. 19, No. 4, pp.~1--10, 1985.

\bibitem{BoseJones} A. Bose and C.K.R.T. Jones, 
\emph{Stability of the in-phase travelling wave solution in a 
pair of coupled nerve fibres},
Indiana University Mathematics Journal 44(1) (1995), pp.189--220.

\bibitem{BGMR} J. Bougie, A. Gangopadhyaya, J. Mallow and C. Rasinariu,
\emph{Review: Supersymmetric quantum mechanics and solvable models},   
Symmetry 4 (2012), pp.~452--473.                                       


\bibitem{CDB2} F. Chardard, F. Dias and T.J. Bridges,
\emph{Computing the Maslov index of solitary waves. Part~2: Phase space
with dimension greater than four}, Physica D 240(17) (2011), pp.~1334--1344. 

\bibitem{deGosson} M.A. de Gosson, \emph{The principles of Newtonian
and quantum mechanics}, Imperial College Press, London, 2001.

\bibitem{DengJones} J. Deng and C. Jones,
\emph{Multi-dimensional Morse index theorems and a symplectic view of
elliptic boundary value problems}, submitted to Transactions of the  
American Mathematical Society, 2009.                                 

\bibitem{DengNii} J.~Deng and S.~Nii, \emph{An infinite-dimensional
Evans function theory for elliptic boundary value problems},
J. Differential Equations 244(4) (2008), pp.~753--767.     

\bibitem{Furutani} K. Furutani, \emph{Review: Fredholm--Lagrangian--Grassmannian
and the Maslov index}, Journal of Geometry and Physics 51 (2004), pp.~269--331. 

\bibitem{HZ} J.~Humpherys and K.~Zumbrun,
\emph{An efficient shooting algorithm for Evans function
calculations in large systems},
Physica D 220 (2006), pp.~116--126.

\bibitem{JLM} C.K.R.T Jones, Y. Latushkin and R. Marangell, 
\emph{The Morse and Maslov indices for matrix Hill's equations},
preprint 2013.

\bibitem{J88} C.K.R.T Jones, \emph{Instability of standing waves for
nonlinear Schr\"odinger-type equations}, Ergodic Theory Dynam. Systems
8$^*$(Charles Conley Memorial Issue) (1988), pp.~119--138. 

\bibitem{LM} V. Ledoux and S.J.A. Malham,
\emph{Spectral shooting is Schubert calculus}, working paper 2009.

\bibitem{LMNT} V. Ledoux, S.J.A. Malham, J. Niesen and V. Th\"ummler,
\emph{Computing stability of multi-dimensional travelling waves},     
SIAM Journal on Applied Dynamical Systems 8(1) (2009), pp. 480--507.  

\bibitem{LMT} V. Ledoux, S.J.A. Malham and V. Th\"ummler,
\emph{Grassmannian spectral shooting},                      
Math. Comp. 79 (2010), pp.~1585--1619.                      

\bibitem{MW} S.J.A. Malham and A. Wiese, 
\emph{Stochastic Lie group integrators},  
SIAM Journal on Scientific Computing 30(2) (2008), pp.~597--617.

\bibitem{HMK}  H.~Munthe-Kaas,  
\emph{High order Runge--Kutta methods on manifolds},
Appl. Numer. Math. 29 (1999), pp.~115--127

\bibitem{PS} A. Pressley and G. Segal,
\emph{Loop groups}, Oxford Mathematical Monographs,
Clarendon Press, Oxford, 1986.                     

\bibitem{RS:Maslov} J. Robbin and D. Salamon,
\emph{The spectral flow and the Maslov index},
Bull. London Math. Soc. 27 (1995), pp.~1--33. 

\bibitem{S:review} B.~Sandstede,
\emph{Stability of travelling waves},
In Handbook of Dynamical Systems II, B.~Fiedler, ed.,
Elsevier (2002), pp.~983--1055.                      

\bibitem{SO} M. Oh and B. Sandstede,
\emph{Evans functions for periodic waves on infinite cylindrical domains},
Journal of Differential Equations 248 (2010), pp.~544--555.



\bibitem{SchiffShnider} J.~Schiff and S.~Shnider,
\emph{A natural approach to the numerical integration
of Riccati differential equations},
SIAM J. Numer. Anal. 36(5) (1999), pp.~1392--1413.

\end{thebibliography}

\end{document}